\newcommand{\nc}{\newcommand}
\nc{\fg}{\mathfrak{f} } \nc{\vg}{\mathfrak{v} } \nc{\wg}{\mathfrak{w} }
\nc{\zg}{\mathfrak{z} } \nc{\ngo}{\mathfrak{n} } \nc{\kg}{\mathfrak{k} }
\nc{\mg}{\mathfrak{m} } \nc{\bg}{\mathfrak{b} } \nc{\ggo}{\mathfrak{g} }
\nc{\ggob}{\overline{\mathfrak{g}} } \nc{\sog}{\mathfrak{so} }
\nc{\sug}{\mathfrak{su} } \nc{\spg}{\mathfrak{sp} } \nc{\slg}{\mathfrak{sl} }
\nc{\glg}{\mathfrak{gl} } \nc{\cg}{\mathfrak{c} } \nc{\rg}{\mathfrak{r} }
\nc{\hg}{\mathfrak{h} } \nc{\tg}{\mathfrak{t} } \nc{\ug}{\mathfrak{u} }
\nc{\dg}{\mathfrak{d} } \nc{\ag}{\mathfrak{a} } \nc{\pg}{\mathfrak{p} }
\nc{\sg}{\mathfrak{s} } \nc{\affg}{\mathfrak{aff} } \nc{\qg}{\mathfrak{q} }
\nc{\pca}{\mathcal{P}} \nc{\nca}{\mathcal{N}} \nc{\lca}{\mathcal{L}}
\nc{\oca}{\mathcal{O}} \nc{\mca}{\mathcal{M}} \nc{\tca}{\mathcal{T}}
\nc{\aca}{\mathcal{A}} \nc{\cca}{\mathcal{C}} \nc{\gca}{\mathcal{G}}
\nc{\sca}{\mathcal{S}} \nc{\hca}{\mathcal{H}} \nc{\bca}{\mathcal{B}}
\nc{\dca}{\mathcal{D}} \nc{\val}{\operatorname{val}}
\nc{\vp}{\varphi} \nc{\ddt}{\frac{d}{dt}} \nc{\dds}{\frac{d}{ds}}
\nc{\dpar}{\frac{\partial}{\partial t}} \nc{\im}{\mathrm{i}}
\nc{\SO}{\mathrm{SO}} \nc{\Spe}{\mathrm{Sp}} \nc{\Sl}{\mathrm{SL}}
\nc{\SU}{\mathrm{SU}} \nc{\Or}{\mathrm{O}} \nc{\U}{\mathrm{U}} \nc{\Gl}{\mathrm{GL}}
\nc{\Se}{\mathrm{S}} \nc{\Cl}{\mathrm{Cl}} \nc{\Spein}{\mathrm{Spin}}
\nc{\Pin}{\mathrm{Pin}} \nc{\G}{\mathrm{GL}_n(\RR)} \nc{\g}{\mathfrak{gl}_n(\RR)}
\nc{\RR}{{\Bbb R}} \nc{\HH}{{\Bbb H}} \nc{\CC}{{\Bbb C}} \nc{\ZZ}{{\Bbb Z}}
\nc{\FF}{{\Bbb F}} \nc{\NN}{{\Bbb N}} \nc{\QQ}{{\Bbb Q}} \nc{\PP}{{\Bbb P}} \nc{\OO}{{\Bbb O}}
\nc{\vs}{\vspace{.2cm}} \nc{\vsp}{\vspace{1cm}} \nc{\ip}{\langle\cdot,\cdot\rangle}
\nc{\ipp}{(\cdot,\cdot)} \nc{\la}{\langle} \nc{\ra}{\rangle} \nc{\unm}{\tfrac{1}{2}}
\nc{\unc}{\tfrac{1}{4}} \nc{\und}{\tfrac{1}{16}} \nc{\no}{\vs\noindent}
\nc{\lam}{\Lambda^2(\RR^n)^*\otimes\RR^n} \nc{\tangz}{{\rm T}^{\rm Zar}}
\nc{\nor}{{\sf n}}  \nc{\mum}{/\!\!/} \nc{\kir}{/\!\!/\!\!/}
\nc{\Ri}{\tfrac{4\Ric_{\mu}}{||\mu||^2}} \nc{\ds}{\displaystyle}
\nc{\ben}{\begin{enumerate}} \nc{\een}{\end{enumerate}} \nc{\f}{\frac}
\nc{\lb}{[\cdot,\cdot]} \nc{\isn}{\tfrac{1}{||v||^2}}
\nc{\gkp}{(\ggo=\kg\oplus\pg,\ip)} \nc{\ukh}{(\ug=\kg\oplus\hg,\ip)}
\nc{\tgkp}{(\tilde{\ggo}=\kg\oplus\pg,\ip)}
\nc{\wt}{\widetilde} \nc{\mm}{M}
\nc{\iop}{\mathtt{i}} \nc{\jop}{\mathtt{j}}
\nc{\Hess}{\operatorname{Hess}} \nc{\ad}{\operatorname{ad}}
\nc{\Ad}{\operatorname{Ad}} \nc{\rank}{\operatorname{rank}}
\nc{\Irr}{\operatorname{Irr}} \nc{\End}{\operatorname{End}}
\nc{\Aut}{\operatorname{Aut}} \nc{\Inn}{\operatorname{Inn}}
\nc{\Der}{\operatorname{Der}} \nc{\Ker}{\operatorname{Ker}}
\nc{\Iso}{\operatorname{Iso}} \nc{\Diff}{\operatorname{Diff}}
\nc{\Lie}{\operatorname{L}} \nc{\tr}{\operatorname{tr}} \nc{\dif}{\operatorname{d}}
\nc{\sen}{\operatorname{sen}} \nc{\modu}{\operatorname{mod}}
\nc{\CRic}{\operatorname{PP}} \nc{\Cric}{\operatorname{P}} \nc{\Ricci}{\operatorname{Ric}}
\nc{\sym}{\operatorname{sym}} \nc{\herm}{\operatorname{herm}} \nc{\symac}{\operatorname{sym^{ac}}}
\nc{\symc}{\operatorname{sym^{c}}} \nc{\scalar}{\operatorname{Scal}}
\nc{\grad}{\operatorname{grad}} \nc{\ricci}{\operatorname{Rc}}
\nc{\Nor}{\operatorname{Norm}}  \nc{\ricc}{\operatorname{Rc^{c}}}
\nc{\Ricc}{\operatorname{Ric^{c}}} \nc{\ricac}{\operatorname{Rc^{ac}}}
\nc{\Ricac}{\operatorname{Ric^{ac}}} \nc{\Riem}{\operatorname{Rm}}
\nc{\riccig}{\operatorname{ric^{\gamma}}} \nc{\Rin}{\operatorname{M}}
\nc{\Le}{\operatorname{L}} \nc{\tang}{\operatorname{T}}
\nc{\level}{\operatorname{level}} \nc{\rad}{\operatorname{r}}
\nc{\abel}{\operatorname{ab}} \nc{\CH}{\operatorname{CH}}
\nc{\mcc}{\operatorname{mcc}} \nc{\Adj}{\operatorname{Adj}}
\nc{\Order}{\operatorname{O}}  \nc{\inj}{\operatorname{inj}} \nc{\proy}{\operatorname{pr}}
\nc{\vol}{\operatorname{vol}} \nc{\Diag}{\operatorname{Dg}}
\nc{\Spec}{\operatorname{Spec}} \nc{\Ima}{\operatorname{Im}} \nc{\Rea}{\operatorname{Re}}
\nc{\spann}{\operatorname{span}}
\theoremstyle{plain}
\newtheorem{theorem}{Theorem}[section]
\newtheorem{proposition}[theorem]{Proposition}
\newtheorem{corollary}[theorem]{Corollary}
\newtheorem{lemma}[theorem]{Lemma}
\theoremstyle{definition}
\theoremstyle{remark}
\newtheorem{example}[theorem]{Example}
\title{Distinguished $G_2$-structures on solvmanifolds}
\author{Jorge Lauret}
\address{Universidad Nacional de C\'ordoba, FaMAF and CIEM, 5000 C\'ordoba, Argentina}
\email{lauret@famaf.unc.edu.ar}
\thanks{This research was partially supported by grants from CONICET, FONCYT and SeCyT (Universidad Nacional de C\'ordoba)}
\begin{document}

\maketitle

\begin{abstract}
Among closed $G_2$-structures there are two very distinguished classes: {\it Laplacian solitons} and {\it Extremally Ricci-pinched} $G_2$-structures.  We study the existence problem and explore possible interplays between these concepts in the context of left-invariant $G_2$-structures on solvable Lie groups.  Also, some Ricci pinching properties of $G_2$-structures on solvmanifolds are obtained, in terms of the extremal values and points of the functional $F=\frac{\scalar^2}{|\Ricci|^2}$, $0< F < 7$.  Many natural open problems have been included.
\end{abstract}

\tableofcontents

\section{Introduction}\label{intro}

Our main motivation in this article is the following heuristic though very natural and intriguing question, which we borrowed from the first page of Besse's book \cite{Bss} and adapted to $G_2$-geometry:

\begin{quote}
Given a  $7$-dimensional differentiable manifold $M$, are there any best (or nicest, or most distinguished) $G_2$-structures on $M$?
\end{quote}

The question remains natural when restricted to special kinds of manifolds or particular classes of $G_2$-structures, like the set of all $G_2$-structures with the same associated metric, left-invariant $G_2$-structures on a given Lie group, etc.  The meaning of the adjectives in the question are of course part of the problem, and any good candidate is expected to be weak enough to allow existence results but also sufficiently strong to imply some kind of uniqueness or finiteness results.

As a first reduction, we consider closed $G_2$-structures, but there are many other reasonable and natural special classes to start with.  We have included in an appendix (see Section \ref{dist-G2}) the definition of several of them, as well as a diagram describing the inclusion relationships between such classes (see Figure \ref{g2-fig}).  No topological obstruction on $M$ to admit a closed $G_2$-structure is known, other than the ones for admitting just a $G_2$-structure, i.e.\ orientable and spin.

In the case when a $G_2$-structure $\vp$ is closed, the only torsion that survives is contained in a $2$-form $\tau$, and the starting situation can be described as follows:
$$
d\vp=0, \qquad \tau=-\ast d\ast\vp, \qquad d\ast\vp=\tau\wedge\vp, \qquad d\tau=\Delta\vp,
$$
where $\ast$ and $\Delta$ denote the Hodge star and Laplacian operator, respectively, defined by the metric attached to $\vp$.

Among closed $G_2$-structures, one finds two concepts which are both distinguished but from points of view of a very different taste:

\begin{itemize}
\item {\it Laplacian solitons}: $d\tau=c\vp+\lca_X\vp$ for some $c\in\RR$ and $X\in\mathcal{X}(M)$.
\item[ ]
\item {\it Extremally Ricci-pinched}: $d\tau=\frac{1}{6}|\tau|^2\vp+\frac{1}{6}\ast(\tau\wedge\tau)$.
\end{itemize}

In this paper, we mainly work in the homogeneous setting (see \cite{LF} for further information); more specifically, in the context of left-invariant $G_2$-structures on solvable Lie groups (or {\it solvmanifolds}).  We aim to overview what is known on the existence of the above two special classes of $G_2$-structures and explore possible interplays.  Diverse open problems have been included throughout the paper.

Any $G_2$-structure or metric on a Lie group is always assumed to be left-invariant.

\subsection{Solitons}\label{sol}
The space $\gca$ of all $G_2$-structures on a given $7$-dimensional manifold $M$ is an open cone in $\Omega^3M$, whose equivalence classes are $\Diff(M)$-orbits.  Assume that at each $\vp\in\gca$, we have a preferred direction $q(\vp)\in\Omega^3M$, an optimal `direction of improvement' in some sense (e.g.\ the gradient of a natural functional on $\gca$).  It is therefore reasonable to consider an element $\vp\in\gca$ distinguished when $q(\vp)$ is tangent to its equivalence class (up to scaling), i.e.\
\begin{equation}\label{sol1}
  q(\vp)\in T_\vp\left(\RR^*\Diff(M)\cdot\vp\right).
\end{equation}
Heuristically, it is like such a $\vp$ is nice enough that it does not need to be improved.  A $G_2$-structure for which condition \eqref{sol1} holds will be called a $q$-{\it soliton}.  It is easy to see that if $q$ is $\Diff(M)$-equivariant, then the following conditions are equivalent:

\begin{itemize}
  \item $\vp$ is a $q$-soliton.
  \item[ ]
  \item $q(\vp)=c\vp+\lca_X\vp$ for some $c\in\RR$, $X\in\mathfrak{X}(M)$.
  \item[ ]
  \item The solution $\vp(t)$ starting at $\vp$ to the corresponding geometric flow 
  $$
  \dpar\vp(t)=q(\vp(t)),
  $$ 
is {self-similar}, i.e.\ $\vp(t)=c(t)f(t)^*\vp$ for some $c(t)\in\RR$ and $f(t)\in\Diff(M)$.
\end{itemize}
The $q$-soliton is said to be {\it expanding}, {\it steady} or {\it shrinking} if $c>0$, $c=0$ or $c<0$, respectively.  The corresponding self-similar solutions are respectively immortal, eternal and ancient if $q(a\vp)=a^\alpha q(\vp)$ for any $a\in\RR^*$ and some fixed $\alpha<1$ (see \cite[Section 4.4]{BF}).

We consider in this paper the direction $q(\vp)=\Delta\vp$, which determines the so called {\it Laplacian solitons} and the Laplacian flow introduced by Bryant in \cite{Bry}.  Many other types of $q$-solitons have also been studied in the literature, see for example \cite{WssWtt1,WssWtt2,Grg,BggFin}.

We next list all the results on Laplacian solitons in the literature that we are aware of:

\begin{itemize}
\item \cite[Corollary 1]{Lin} There are no compact shrinking Laplacian solitons, and the only compact steady Laplacian solitons are the torsion-free $G_2$-structures (see also \cite[Proposition 9.4]{Lty} for a shorter proof in the closed case).
\item[ ]
\item Any nearly parallel $G_2$-structure $\vp$ satisfies $\Delta\vp=c^2\vp$ and so it is a coclosed expanding Laplacian soliton.  Examples are given by the round and squashed spheres (see
    \cite[Section 4.1]{WssWtt2}).
\item[ ]
\item \cite[Section 6]{KrgMcKTsu} Examples of non-compact expanding coclosed Laplacian solitons which are not nearly parallel.  However, they still are all {\it eigenforms} (i.e.\ $\Delta\vp=c\vp$ for some $c\in\RR$).
\item[ ]
\item \cite[Proposition 9.1]{Lty} The only compact and closed Laplacian solitons which are eigenforms are the torsion-free $G_2$-structures.
\item[ ]
\item \cite[Section 7]{BF} A closed $G_2$-structure on a nilpotent Lie group which is an expanding Laplacian soliton and is not an eigenform was found.
\item[ ]
\item \cite{Ncl} Closed expanding Laplacian solitons were exhibited on seven of the twelve nilpotent Lie groups admitting a closed $G_2$-structure.  There is even a one-parameter family of pairwise non-homothetic closed Laplacian solitons on one of them.
\item[ ]
\item \cite{LF} Homogeneous Laplacian solitons are studied using the algebraic soliton approach.  Many continuous families of expanding Laplacian solitons on almost-abelian Lie groups were given (see \cite[Section 5.2]{LF}).
\item[ ]
\item \cite[Section 4]{LS-ERP} Examples of steady and shrinking closed Laplacian solitons were found on solvmanifolds by using coupled $\SU(3)$-structures.
\item[ ]
\item \cite{FinRff3} Closed expanding Laplacian solitons were found on solvmanifolds from symplectic half-flat $\SU(3)$-structures.
\end{itemize}

The following are open questions on Laplacian solitons:

\begin{itemize}
\item Are there compact and closed expanding Laplacian solitons?
\item[ ]
\item Are there compact expanding Laplacian solitons other than nearly parallel $G_2$-structures?
\end{itemize}

\subsection{Extremally Ricci pinched $G_2$-structures}\label{erp}
The following nice interplay between the metric and the torsion $2$-form of a closed $G_2$-structure was discovered by R. Bryant.  Let $\scalar$ and $\Ricci$ denote the scalar and Ricci curvature of the metric attached to a $G_2$-structure.

\begin{theorem}\cite[Corollary 3]{Bry}\label{bryant}
If $\vp$ is a closed $G_2$-structure on a compact manifold $M$, then
$$
\int_M \scalar^2 \ast 1 \leq 3\int_M |\Ricci|^2 \ast 1,
$$
and equality holds if and only if $\; d\tau = \frac{1}{6}|\tau|^2\vp + \frac{1}{6}\ast(\tau\wedge\tau)$.
\end{theorem}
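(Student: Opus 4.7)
The plan is to leverage Bryant's pointwise formulas for the Ricci tensor and scalar curvature of a closed $G_2$-structure $\vp$ in terms of its torsion $2$-form $\tau$.  The first step is to recall the well-known identity $\scalar=-\tfrac{1}{2}|\tau|^2$, valid for any closed $G_2$-structure, so that $\scalar^2=\tfrac{1}{4}|\tau|^4$ can be compared pointwise against $|\Ricci|^2$.

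Next, I would invoke Bryant's explicit formula expressing $\Ricci$ as a quadratic expression in $\tau$ plus a linear term in $d\tau$.  Since $d\vp=0$, we have $d\tau=\Delta\vp$, so the Laplacian of $\vp$ enters naturally.  Decomposing $d\tau\in\Omega^3(M)$ into its $G_2$-irreducible components $d\tau=\pi_1(d\tau)+\pi_7(d\tau)+\pi_{27}(d\tau)$, and noting that $\tfrac{1}{6}|\tau|^2\vp\in\Omega^3_1$ while $\tfrac{1}{6}\ast(\tau\wedge\tau)\in\Omega^3_1\oplus\Omega^3_{27}$, the ERP identity becomes the statement that $\pi_7(d\tau)=0$ together with a prescribed matching of the $\Omega^3_1$ and $\Omega^3_{27}$ projections.

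The third step is to expand $|\Ricci|^2$ pointwise.  Beyond the leading $\tfrac{1}{3}\scalar^2$ term, one obtains a sum of nonnegative squares --- essentially the squared norms of the $\pi_7$ and $\pi_{27}$ projections of $d\tau$ after subtracting the prescribed $\tfrac{1}{6}$-multiples --- plus cross-terms that are linear in $d\tau$.  The cross-terms are handled using Stokes' theorem on the compact manifold $M$: integrating by parts against $\vp$ or $\ast\vp$ and using $d\vp=0$ converts them into exact forms that vanish upon integration.  This should yield an identity of the shape $\int_M\bigl(3|\Ricci|^2-\scalar^2\bigr)\ast 1=\int_M|\sigma|^2\ast 1\geq 0$, where $\sigma$ denotes the piece of $d\tau-\tfrac{1}{6}|\tau|^2\vp-\tfrac{1}{6}\ast(\tau\wedge\tau)$ that measures the failure of the ERP equation.

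The equality case then drops out immediately: strict inequality persists unless $\sigma\equiv 0$ everywhere, which is exactly the stated pointwise identity.  The main obstacle I anticipate is the representation-theoretic bookkeeping --- verifying that the constant $\tfrac{1}{6}$ (rather than some neighbouring rational) emerges from both the $\Omega^3_1$ and the $\Omega^3_{27}$ components, and confirming that the cross-terms genuinely integrate to zero via Stokes rather than contributing an extra obstruction.  Compactness of $M$ is essential here, since it is precisely in that setting that the divergence remainders can be discarded.
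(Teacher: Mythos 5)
The paper does not actually prove this statement: it is quoted verbatim from Bryant's \emph{Some remarks on $G_2$-structures} (Corollary 3 there), so there is no in-paper argument to compare against. Measured against Bryant's original proof, your outline is a faithful reconstruction of the strategy: start from $\scalar=-\tfrac{1}{2}|\tau|^2$, express $\Ricci$ via Bryant's torsion formula, split $d\tau$ into $G_2$-irreducible pieces, and exhibit $\int_M\bigl(3|\Ricci|^2-\scalar^2\bigr)\ast 1$ as the integral of a square after discarding divergence terms, with equality forcing the ERP equation pointwise. Two refinements are worth recording. First, for \emph{every} closed $G_2$-structure the $\Omega^3_1$ and $\Omega^3_7$ components of $d\tau$ are already pinned down: $\langle d\tau,\vp\rangle=|\tau|^2$ (from $d(\tau\wedge\ast\vp)=0$) and $\pi_7(d\tau)=0$ (since $\ast\tau=-d\ast\vp$ gives $d\tau\wedge\vp=-d\ast\tau=dd\ast\vp=0$); one checks these agree with the corresponding components of $\tfrac{1}{6}|\tau|^2\vp+\tfrac{1}{6}\ast(\tau\wedge\tau)$, so the ERP equation is purely a condition on the $\Omega^3_{27}$ component, not a "matching" imposed in three summands as you suggest. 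Second, the decisive use of compactness is the single identity $\int_M\langle d\tau,\ast(\tau\wedge\tau)\rangle\ast 1=\tfrac{1}{3}\int_M d(\tau\wedge\tau\wedge\tau)=0$, which kills the one cross-term that is not determined pointwise; the remaining cross-term $\int_M|\tau|^2\langle d\tau,\vp\rangle\ast 1$ does \emph{not} vanish but equals $\int_M|\tau|^4\ast 1=4\int_M\scalar^2\ast 1$ and is absorbed into the left-hand side. As written, your text is a roadmap rather than a proof --- the constants $\tfrac{1}{6}$ and the factor $3$ are exactly the content being deferred to "bookkeeping" --- but the roadmap is the correct one and leads to Bryant's argument without obstruction.
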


The special $G_2$-structures for which equality holds were called {\it extremally Ricci-pinched} (ERP for short) in \cite[Remark 13]{Bry}.   Notice the factor of $3$ on the right hand side, much smaller than the factor of $7$ provided by the Cauchy-Schwartz inequality, only attained at Einstein metrics.

As far as we know, there are only two examples of ERP $G_2$-structures in the literature and they are both homogeneous: the first one was given in \cite[Example 1]{Bry} on the homogeneous space $\Sl_2(\CC)\ltimes\CC^2/\SU(2)$ (and on any compact quotient by a lattice), which also has a presentation as a $G_2$-structure on the solvable Lie group given in \cite[Examples 4.13, 4.10]{LS-ERP}, and the one found on a unimodular solvable Lie group in \cite[Example 4.7]{LS-ERP}.  Surprisingly (or not), both examples are also steady Laplacian solitons.  We do not know if there could be an interplay between the two notions.

Motivated by Theorem \ref{bryant}, we consider the invariant (up to isometry and scaling) functional
$$
F:=\frac{\scalar^2}{|\Ricci|^2}, \qquad 0\leq F\leq 7,
$$
on the space of all non-flat homogeneous closed $G_2$-structures (recall that a Ricci flat homogeneous Riemannian manifold is necessarily flat; see \cite{AlkKml}).  Note that after integrating on $M$ in the compact case, $F\leq 3$ by Theorem \ref{bryant}.  On the other hand, we know that $F<7$ always since no solvable Lie group admits an Einstein closed (non-parallel) $G_2$-structure (see \cite{FrnFinMnr2}) and the Alekseevskii Conjecture, asserting that any homogeneous Einstein metric of negative scalar curvature is isometric to a solvmanifold, has recently been proved in dimension $7$ (see \cite{ArrLfn}).

Within the class of closed $G_2$-structures on almost-abelian Lie groups, $F\leq 1$ and equality holds precisely at non-nilpotent expanding Laplacian solitons (see \cite[Section 5]{LF}).  At some point it was not unreasonable to expect that $F\leq 3$ would also hold in the homogeneous case.  However, we found in \cite{LS-ERP} a curve $\vp_t$, $\unc\leq t\leq 1$, of closed $G_2$-structures on pairwise non-isomorphic solvmanifolds such that $F(\vp_t)$ is strictly decreasing and
$$
F(\vp_{1/4})=\tfrac{81}{17} \; (\approx 4.76) \; > \;  3=F(\vp_1).
$$
Furthermore, $\vp_t$ is a shrinking Laplacian soliton for any $\unc\leq t<1$ and $\vp_1$ is the ERP steady Laplacian soliton given by
Bryant.  It is therefore natural to wonder about what would be the `extremally Ricci pinched' $G_2$-structures in the homogeneous case:

\begin{quote}
What is the value of $\sup F$ and its meaning?  Is it a maximum?  Are the maximal $G_2$-structures distinguished in some sense?
\end{quote}

We study the behavior of the functional $F$ on solvmanifolds in Section \ref{RP-g2-sec}, after giving a summary on what is known on Ricci pinching of solvmanifolds in the Riemannian case in Section \ref{RP-sec}.

\vs \noindent {\it Acknowledgements.} The author is very grateful to the organizers of the `Workshop on $G_2$ Manifolds and Related Topics', August 21 - 25, 2017 and to The Fields Institute for the great hospitality.  The author would also like to thank Ramiro Lafuente for very helpful comments.

\section{The space of closed $G_2$-structures on solvmanifolds}\label{preli}

We fix a $7$-dimensional real vector space $\sg$ endowed with a basis $\{ e_1,\dots,e_7\}$ and the positive $3$-form
$$
\vp:=e^{127}+e^{347}+e^{567}+e^{135}-e^{146}-e^{236}-e^{245},
$$
whose associated inner product $\ip$ is the one making the basis $\{ e_i\}$ orthonormal.  Let $\sca\subset\Lambda^2\sg^*\otimes\sg$ denote the algebraic subset of all Lie brackets on $\sg$ which are solvable.  Each $\mu\in\sca$ will be identified with the left-invariant $G_2$-structure determined by $\vp$ on the simply connected solvable Lie group $S_\mu$ with Lie algebra $(\sg,\mu)$:
$$
\mu \longleftrightarrow (S_\mu,\vp).
$$
In this way, the isomorphism class $\Gl_7(\RR)\cdot\mu$ stands for the set of all left-invariant $G_2$-structures on $S_\mu$:
$$
(S_{h\cdot\mu},\vp) \longleftrightarrow (S_\mu,\vp(h\cdot,h\cdot,h\cdot)), \qquad\forall h\in\Gl_7(\RR).
$$
Note that $h^{-1}$ is an isomorphism determining an equivalence between these two Lie groups endowed with $G_2$-structures.  Recall that any $G_2$-structure or metric on a Lie group is assumed to be left-invariant.

Thus any two Lie brackets in the same $G_2$-orbit are equivalent as $G_2$-structures, and if they are in the same $\Or(7)$-orbit then they are isometric as Riemannian metrics.  Both converse assertions hold for completely real Lie brackets.  We note that the orbit $\Or(7)\cdot\mu$ consists of all the $G_2$-structures on $S_\mu$ defining some fixed metric.

By intersecting $\sca$ with the linear subspace $\{\mu\in\Lambda^2\sg^*\otimes\sg:d_\mu\vp=0\}$, one obtains the $G_2$-invariant algebraic subset
$$
\sca_{closed} := \{\mu\in\sca:d_\mu\vp=0\}.
$$
The space $\sca_{closed}$ therefore parameterizes the set of all closed $G_2$-structures on solvmanifolds.  Note that a Lie group $S_\mu$ admits a closed $G_2$-structure if and only if the orbit $\Gl_7(\RR)\cdot\mu$ meets $\sca_{closed}$ (or equivalently, the above linear subspace).  We do not know much about the topology of the cone $\Gl_7(\RR)\cdot\mu\cap\sca_{closed}$ of all closed $G_2$-structures on a given Lie group: is it connected?  Is its intersection with a sphere connected?

Recall that for each $\mu\in\sca_{closed}$, the only torsion form that survives is the $2$-form $\tau_\mu\in\Lambda^2\sg^*$ given by
$$
\tau_\mu=-\ast d_\mu\ast\vp, \qquad d_\mu\ast\vp=\tau_\mu\wedge\vp.
$$
We also consider the $G_2$-invariant subset of torsion-free $G_2$-structures,
$$
\sca_{tf}:= \{\mu\in\sca:d_\mu\vp=0, \; d_\mu\ast\vp=0\} = \left\{\mu\in\sca_{closed}:\tau_\mu=0\right\},
$$
and the $\Or(7)$-invariant subset
$$
\sca_{flat} := \left\{\mu\in\sca:(S_\mu,\ip)\;\mbox{is flat}\right\} = \left\{\mu\in\sca:\scalar_\mu=0\right\}.
$$
Since the scalar curvature of $\mu$ (i.e.\ of $(S_\mu,\ip)$) equals $\scalar_\mu=-\unm|\tau_\mu|^2$, we obtain that
$$
\sca_{tf} = \sca_{closed}\cap\sca_{flat}.
$$

\subsection{Nilpotent case}\label{nilp-preli}
There are exactly twelve nilpotent Lie algebras admitting a closed $G_2$-structure (see \cite{CntFrn}).  Thus the space $\sca_{closed}$ meets twelve nilpotent  $\Gl(\sg)$-orbits, say $\Gl(\sg)\cdot\mu_1,\dots,\Gl(\sg)\cdot\mu_{12}$.  In \cite{FrnFinMnr}, the authors classified which of these twelve Lie groups admit a closed $G_2$-structure which is in addition a Ricci soliton (called {\it nilsolitons} in the nilpotent case), and in \cite{Ncl}, the existence of closed Laplacian solitons was studied.  The following information has been extracted from these two articles (we use the same enumeration of the algebras):

\begin{itemize}
  \item $\mu_1$: This is the abelian Lie algebra and so $S_{\mu_1}=\RR^7$ admits a unique $G_2$-structure up to equivalence which is torsion-free.

  \item $\mu_2$: The Lie group $S_{\mu_2}$ admits a unique closed $G_2$-structure up to equivalence and scaling, which is a nilsoliton and also a Laplacian soliton.

  \item $\mu_3$: There exists a curve of closed Laplacian solitons on $S_{\mu_3}$ which are not nilsolitons; however, there are no nilsoliton closed $G_2$-structures on this group.

  \item $\mu_4$: The Lie group $S_{\mu_4}$ admits a pairwise non-equivalent one-parameter family of closed $G_2$-structures, among which there are a nilsoliton and a (different) Laplacian soliton.

  \item $\mu_5$: $S_{\mu_5}$ does not admit any closed $G_2$-structure which is a nilsoliton. There is though a Laplacian soliton belonging to a curve of closed $G_2$-structures.

  \item $\mu_6$: The Lie group $S_{\mu_6}$ admits a curve of closed $G_2$-structures, one of them being a nilsoliton and another one a Laplacian soliton.

  \item $\mu_7$: $S_{\mu_7}$ does not admit any closed $G_2$-structure which is a nilsoliton. However, there exists a curve of closed $G_2$-structures containing a Laplacian soliton.

  \item $\mu_8,\mu_9,\mu_{11}$: None of these Lie groups admit a closed $G_2$-structure which is a nilsoliton.

  \item $\mu_{10}$: The existence of a nilsoliton closed $G_2$-structure in $S_{\mu_{10}}$ is still open.

  \item $\mu_{12}$: The Lie group $S_{\mu_{12}}$ admits a closed $G_2$-structure which is also a nilsoliton.
\end{itemize}

The existence of closed Laplacian solitons on the Lie groups $S_{\mu_8},\dots,S_{\mu_{12}}$ remains open.

\subsection{Almost-abelian case}\label{alm-abel-preli}
Closed $G_2$-structures in the class of {\it almost-abelian} Lie algebras (i.e.\ with a codimension-one abelian ideal) were studied in \cite[Section 5]{LF}, we refer the reader there for further information.  One attaches to each matrix $A\in\glg_6(\RR)$ a Lie bracket $\mu_A\in\sca$ as follows: relative to a fixed orthonormal basis $\{ e_1,\dots,e_7\}$, $\ngo:=\spann\{ e_1,\dots,e_6\}$ is an abelian ideal for $\mu_A$ and $\ad_{\mu_A}{e_7}|_{\ngo}=A$.

We have that $\mu_A\in\sca_{closed}$ if and only if $A\in\slg_3(\CC)\subset\glg_6(\RR)$, where the complex structure defining $\slg_3(\CC)$ is $Je_i=e_{i+1}$, $i=1,3,5$, and $\mu_A\in\sca_{tf}$ if and only if $A\in\sug(3)$.  It is easy to see that $\mu_B\in\Gl(\sg)\cdot\mu_A$ for $A,B\in\slg_3(\CC)$ if and only if $B\in\RR^*\Sl_3(\CC)\cdot A$, where the last action is by conjugation.  This implies that every almost-abelian Lie algebra admitting a closed $G_2$-structure is isomorphic to $\mu_A$ for some matrix $A$ in the following list:
$$
\left[\begin{matrix} \alpha&&\\ &\beta& \\ &&\gamma \end{matrix}\right],  \left[\begin{matrix} \alpha&1&\\ &\alpha& \\ &&-2\alpha \end{matrix}\right],  \left[\begin{matrix} 0&1&\\ &0& \\ &&0 \end{matrix}\right], \left[\begin{matrix} 0&1&\\ &0&1 \\ &&0 \end{matrix}\right],
\left[\begin{matrix} \im &&\\ &a\im & \\ &&b\im  \end{matrix}\right], \left[\begin{matrix} \im&1&\\ &\im& \\ &&-2\im \end{matrix}\right],
$$
where $\alpha,\beta,\gamma\in\CC$, $\alpha+\beta+\gamma=0$, $|\alpha|=1$, $\alpha\ne\pm\im$ and $a,b\in\RR$, $1+a+b=0$.  The two nilpotent matrices in the middle define groups isomorphic to $S_{\mu_2}$ and $S_{\mu_6}$, respectively (see Section \ref{nilp-preli}).  Moreover, each Lie group $S_{\mu_A}$ admits an $\Sl_3(\CC)$-orbit of closed $G_2$-structures up to scaling and each $\SU(3)$-orbit consists of pairwise equivalent structures.  Thus there are continuous families of closed $G_2$-structures depending on many parameters on most of these Lie groups (see e.g.\ \cite[Example 5.9]{LF}).

It is easy to see that if $A=S+N$ for $A,S,N\in\slg_3(\CC)$, where $S$ is semisimple, $N$ nilpotent and $[S,N]=0$, then $\mu_S,\mu_N\in\overline{\RR^*\Sl_3(\CC)\cdot\mu_A}\subset\sca_{closed}$.  It is worth observing that any kind of geometric quantity associated to closed $G_2$-structures depends continuously on the Lie bracket $\mu\in\sca_{closed}$, so $\mu_S$ and $\mu_N$ inherit any property that $\mu_A$ may have.  This can also be used to study pinching curvature properties (see \cite[Section 3.3]{BF} and the next sections).

It is proved in \cite[Proposition 5.22]{LF} that $\mu_A$ is a Laplacian soliton for any normal matrix $A\in\slg_3(\CC)$ (see \cite[Propositions 5.22, 5.27]{LF} for the Laplacian soliton conditions for a nilpotent $A$).

\section{Ricci pinching of solvmanifolds}\label{RP-sec}

In this section, we give a short overview on Ricci pinching of solvmanifolds.  We refer to \cite{RP} for a more detailed treatment with a complete list of references.

We fix an $n$-dimensional real vector space $\sg$ endowed with an inner product $\ip$.  Let $\sca\subset\Lambda^2\sg^*\otimes\sg$ denote the algebraic subset of all Lie brackets on $\sg$ which are solvable.  Given $\mu\in\sca$, its isomorphism class $\Gl(\sg)\cdot\mu$ can be identified with the set of all left-invariant metrics on the corresponding simply connected solvable Lie group $S_\mu$ in the following way:
$$
(S_{h\cdot\mu},\ip) \longleftrightarrow (S_\mu,\la h\cdot,h\cdot\ra), \qquad\forall h\in\Gl(\sg).
$$
Consider the following $\Gl(\sg)$-invariant subsets of $\sca$:
\begin{align*}
\sca_{\im\RR} :=& \left\{\mu\in\sca:\Spec(\ad_\mu{X})\subset\im\RR, \;\forall X\in\sg\right\}, \\
\sca_{\RR}  :=& \left\{\mu\in\sca:\mbox{either}\, \ad_\mu{X} \, \mbox{is nilpotent or}\, \Spec(\ad_\mu{X})\nsubseteq\im\RR, \;\forall X\in\sg\right\}, \\
\sca_{c\RR} :=& \left\{\mu\in\sca:\Spec(\ad_\mu{X})\subset\RR, \;\forall X\in\sg\right\}, \\
\sca_{unim} :=& \left\{\mu\in\sca:\tr{\ad_\mu X}=0,  \;\forall X\in\sg\right\}, \\
\nca :=& \left\{\mu\in\sca: \mu \; \mbox{is nilpotent}\right\},
\end{align*}
where $\Spec(\ad_\mu{X})$ is the set of eigenvalues of the operator $\ad_\mu{X}$.  

The Lie algebras in $\sca_{\im\RR}$ and $\sca_\RR$ are called of {\it imaginary} and {\it real type}, respectively (see e.g.\ \cite[Section 3]{BhmLfn}).  The closed subset $\sca_{c\RR}$ is known in the literature as the class of {\it completely real} or {\it completely solvable} Lie algebras, and $\sca_{unim}$ is the subset of {\it unimodular} solvable Lie algebras.  It easily follows that $\sca_{\im\RR}$ is closed, $\sca_{\RR}\smallsetminus\nca$ is open in $\sca$ and $\sca_{\im\RR}\cap\sca_\RR=\nca$.  We also consider the subset
$$
\sca_{flat} := \left\{\mu\in\sca:(S_\mu,\ip)\;\mbox{is flat}\right\}.
$$
The following inclusions hold,
$$
\{ 0\}\subset\sca_{flat}\subset\Gl(\sg)\cdot\sca_{flat}\subset\sca_{\im\RR}\subset\sca_{unim}, \qquad \{ 0\}\subset\nca\subset\sca_{c\RR}\subset\sca_\RR.
$$
and the following lemma will be very useful.

\begin{lemma}\label{ramiro}\cite[Lemma 3.4]{BhmLfn}
If $\mu\in\sca_\RR$ then $\overline{\Gl(\sg)\cdot\mu}\cap\sca_{flat}=\{ 0\}$.
\end{lemma}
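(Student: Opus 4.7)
The plan is a proof by contradiction. Suppose there exists $\lambda\in\overline{\Gl(\sg)\cdot\mu}\cap\sca_{flat}$ with $\lambda\ne 0$, and pick $h_k\in\Gl(\sg)$ with $h_k\cdot\mu\to\lambda$. By Milnor's classical structure theorem for flat solvable Lie groups, $\sg$ splits (orthogonally with respect to $\ip$) as $\sg=\ag\oplus\ngo$ with $\ag$ an abelian subalgebra, $\ngo$ an abelian ideal, and $\ad_\lambda X|_{\ngo}\in\sog(\ngo)$ for every $X\in\ag$. Non-triviality of $\lambda$ lets me choose $X_0\in\ag$ with $\ad_\lambda X_0\ne 0$, so $\ad_\lambda X_0$ has a nonzero purely imaginary eigenvalue $\im\gamma$, $\gamma\in\RR\smallsetminus\{0\}$.

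Next I would extract the rigidity hidden in the real-type hypothesis. Let $\ngo_\mu$ be the nilradical of $\mu$; solvability gives $\bar\sg:=\sg/\ngo_\mu$ abelian, and its $\ad$-action on $\ngo_\mu$ admits a simultaneous diagonalization of the semisimple parts over $\CC$, with weights $\alpha_1,\dots,\alpha_r\colon\bar\sg\to\CC$. Consequently $\Spec(\ad_\mu Y)=\{0\}\cup\{\alpha_j(\bar Y)\}$ for every $Y\in\sg$, and $\mu\in\sca_\RR$ translates into
$$
\Rea\alpha_j(\bar Y)=0 \ \Longrightarrow\ \Ima\alpha_j(\bar Y)=0 \qquad \text{whenever } \alpha_j\ne 0.
$$
This inclusion of hyperplanes forces $\alpha_j=(1+\im c_j)\beta_j$ for a nonzero real linear functional $\beta_j$ on $\bar\sg$ and some $c_j\in\RR$. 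Hence the nonzero eigenvalues of $\ad_\mu Y$ lie in the finite union of real lines $\bigcup_j(1+\im c_j)\RR\subset\CC$, which entirely misses $\im\RR\smallsetminus\{0\}$.

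Finally I would apply this to the convergent sequence. Conjugation invariance of spectra yields
$$
\Spec(\ad_{h_k\cdot\mu}X_0)=\Spec(\ad_\mu(h_k^{-1}X_0))\subset\{0\}\cup\bigcup_j(1+\im c_j)\RR,
$$
while continuity of the roots of the characteristic polynomial forces this spectrum to converge, as a multiset, to $\Spec(\ad_\lambda X_0)\ni\im\gamma$. After passing to a subsequence there exist a fixed index $j$ and $t_k\in\RR$ with $(1+\im c_j)t_k\to\im\gamma$; separating real and imaginary parts gives $t_k\to 0$ and $c_jt_k\to\gamma$, so $\gamma=c_j\cdot 0=0$, contradicting $\gamma\ne 0$. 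The step I expect to be the conceptual crux is the weight-rigidity of paragraph two: real type is really the strong condition that each weight of $\bar\sg$ on $\ngo_\mu$ is a complex scalar multiple of a real functional, and only this — rather than the weaker pointwise condition on individual operators $\ad_\mu Y$ — makes the bound stable under the $\Gl(\sg)$-action and hence inherited by any orbit-closure limit.
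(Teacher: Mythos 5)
You should first be aware that the paper contains no proof of this statement: it is quoted verbatim from B\"ohm--Lafuente \cite[Lemma 3.4]{BhmLfn}, so your argument can only be assessed on its own terms. Its architecture (Milnor's description of flat metric Lie algebras, weights of the adjoint representation, continuity of the characteristic polynomial along $h_k\cdot\mu\to\lambda$) is reasonable, but the step you yourself call the conceptual crux is false. The hypothesis $\mu\in\sca_\RR$ says: for every $Y$, if \emph{all} real parts $\Rea\alpha_j(\bar Y)$ vanish, then \emph{all} values $\alpha_j(\bar Y)$ vanish; that is, $\bigcap_j\Ker(\Rea\alpha_j)\subseteq\bigcap_j\Ker(\Ima\alpha_j)$. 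This joint condition does \emph{not} yield the weight-by-weight inclusion $\Ker(\Rea\alpha_j)\subseteq\Ker(\Ima\alpha_j)$ that you need in order to write $\alpha_j=(1+\im c_j)\beta_j$. Concretely, let $\ag=\spann\{X,Y\}$ act on an abelian ideal $\RR^3$ by the commuting derivations $\ad X|_{\RR^3}=(1)\oplus J$ and $\ad Y|_{\RR^3}=(0)\oplus I_2$, with $J$ a rotation generator on the last two coordinates; the nonzero weights of $aX+bY$ are $a$ and $b\pm\im a$, the joint condition holds (so this metabelian algebra is of real type), yet for $\alpha=b+\im a$ one has $\Ker(\Rea\alpha)=\{b=0\}\nsubseteq\{a=0\}=\Ker(\Ima\alpha)$, and $\alpha$ is not a complex multiple of a real functional. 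Your union of lines $\bigcup_j(1+\im c_j)\RR$ therefore does not exist in general, and the final limiting argument, which tracks one weight at a time, collapses.

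The gap is repairable without changing the architecture, because the correct joint condition is still stable under limits of the \emph{whole} eigenvalue tuple. For $X_0\in\ag$ one has $\ad_\lambda X_0=0\oplus(\text{skew})$, so $\Spec(\ad_\lambda X_0)\subset\im\RR$, and multiset convergence of the roots gives $\Rea\alpha_j(\bar v_k)\to 0$ for \emph{every} $j$, where $v_k:=h_k^{-1}X_0$. Write $\bar v_k=w_k+u_k$ with $w_k\in K:=\bigcap_j\Ker(\Rea\alpha_j)$ and $u_k\in K^{\perp}$; since $v\mapsto(\Rea\alpha_j(v))_j$ is injective on $K^{\perp}$ (a finite-dimensional space), $u_k\to 0$, while $\Ima\alpha_j|_{K}=0$ by real type. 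Hence $\alpha_j(\bar v_k)\to 0$ for all $j$, forcing $\Spec(\ad_\lambda X_0)=\{0\}$ and contradicting the existence of the eigenvalue $\im\gamma\ne 0$. Note that your decomposition $\bar v_k=w_k+u_k$ is also where one must be careful about the unboundedness of $h_k^{-1}$: the conclusion survives only because the real-type condition controls $\Ima\alpha_j$ on all of $K$, not merely near the origin. Replacing your second paragraph by this joint-kernel argument yields a correct proof.
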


From our point of view, the Ricci pinching is captured by the extremal values of the functional
$$
F:\sca\smallsetminus\sca_{flat} \longrightarrow \RR, \qquad F(\mu):=\frac{\scalar_\mu^2}{|\Ricci_\mu|^2},
$$
where $\scalar_\mu$ and $\Ricci_\mu$ are respectively the scalar curvature and Ricci operator of $\mu$ (recall that $\mu\leftrightarrow(S_\mu,\ip)$).  Note that $F$ is invariant up to isometry and scaling; in particular, $F$ is $\Or(\sg)$-invariant.  Since $\scalar_\mu=0$ if and only if $\mu\in\sca_{flat}$, one obtains from the Cauchy-Schwartz inequality that
$$
0<F(\mu)\leq n, \qquad\forall\mu\in\sca\smallsetminus\sca_{flat},
$$
with $F(\mu)=n$ if and only if $\mu$ is Einstein.  For each $\mu\in\sca$ we define,
$$
m_\mu:=\inf F(\Gl(\sg)\cdot\mu), \qquad M_\mu:=\sup F(\Gl(\sg)\cdot\mu),
$$
that is, the infimum and supremum of $F$ among all left-invariant metrics on the Lie group $S_\mu$.  It follows that
$$
(m_\mu,M_\mu)\subset F(\Gl(\sg)\cdot\mu)\subset F\left(\overline{\Gl(\sg)\cdot\mu}\right)\subset[m_\mu,M_\mu].
$$
Recall that $F$ is not defined on $\sca_{flat}$, so when we write $F(\cca)$ for some subset $\cca\subset\sca$ we always mean $F(\cca\smallsetminus\sca_{flat})$.

An element $\mu\in\sca$ is called a {\it solvsoliton} when $\Ricci_\mu=cI+D$ for some $c\in\RR$ and $D\in\Der(\mu)$.  Any solvsoliton belongs to $\sca_\RR$ (see \cite[Theorem 4.8]{solvsolitons}) and any Ricci soliton metric on a solvable Lie group is isometric to a solvsoliton (see \cite{Jbl}).

The only non-abelian Lie groups with $m_\mu=M_\mu$ are precisely those admitting a unique metric up to isometry and scaling, i.e.\
$$
\mu_{heis}(e_1,e_2)=e_3, \qquad \mu_{hyp}(e_n,e_i)=e_i, \quad i=1,\dots,n-1,
$$
and zero otherwise.  Note that $S_{\mu_{hyp}}$ is isometric to the real hyperbolic space $\RR H^n$ and so $m_{\mu_{hyp}}=M_{\mu_{hyp}}=n$.  On the other hand, $m_{\mu_{heis}}=M_{\mu_{heis}}=\frac{1}{3}$, and since $\mu_{heis}\in \overline{\Gl(\sg)\cdot\mu}$ for any $\mu\notin\Gl(\sg)\cdot\mu_{hyp}$, we have that
\begin{quote}
$m_\mu\leq\frac{1}{3}$ for any $\mu\in\sca$ such that $\mu\notin\Gl(\sg)\cdot\mu_{hyp}$.
\end{quote}

The maximum of $F$ among all left-invariant metrics on a nilpotent Lie group is attained at a nilsoliton, which is known to be unique up to isometry and scaling, if one exists.  For any nonzero $\mu\in\nca$,
$$
F(\Gl(\sg)\cdot\mu)=\left\{
\begin{array}{ll}
\left(\frac{1}{3},M_\mu\right], & S_\mu\;\mbox{admits a nilsoliton and}\, \mu\notin\Gl(\sg)\cdot\mu_{heis}, \\ \\
\left(\frac{1}{3},M_\mu\right), & S_\mu\;\mbox{does not admit any nilsoliton}, \\ \\
\left\{\frac{1}{3}\right\}, & \mu\in\Gl(\sg)\cdot\mu_{heis}.
\end{array} \right.
$$
We also have that $F(\nca)=[\frac{1}{3},C_n]$ for some constant $C_n<n$ depending only on $n$, which is necessarily the value of $F$ at some nilsoliton.  The nilsolitons with $\Ricci=\Diag(1,2,\dots,n)$ have $F=\frac{n(n-1)}{2(2n+1)}$, showing that $\frac{1}{5}n\leq C_n$ for all $7\leq n$.  In the nilpotent case, the functional $F$ is strictly increasing along any Ricci flow solution $g(t)$, unless $g(0)$ is a nilsoliton (see \cite{nilricciflow}).

As in Section \ref{alm-abel-preli}, in order to study the almost-abelian case, one fixes an orthogonal decomposition $\sg=\ngo\oplus\RR e_n$ and attaches to each matrix $A\in\glg_{n-1}(\RR)$ (identified with $\glg(\ngo)$ via any fixed orthonormal basis) the Lie bracket $\mu_A$ defined by $\mu_A(\ngo,\ngo)=0$ and $\ad_{\mu_A}{e_n}|_{\ngo}=A$.  The construction covers, up to isometry, all left-invariant metrics on almost abelian Lie groups. Note that the class of almost-abelian Lie brackets is contained in $\sca_{\im\RR}\cup\sca_\RR$.

Using well-known formulas for the Ricci curvature of solvmanifolds, one obtains that
$$
F(A) \leq 1+\frac{(\tr{A})^2}{\tr{S(A)^2}} \leq n,
$$
where $S(A):=\unm(A+A^t)$.  Moreover, $F(A)=n$ if and only if $S(A)=aI$, $a\ne 0$, if and only if $\mu_A$ is isometric to the real hyperbolic space $\RR H^n$.  It was proved in
\cite[Proposition 3.3]{Arr} that $A$ is a solvsoliton if and only if either $A$ is normal or $A$ is nilpotent and $[A,[A,A^t]]=cA$ for some $c\in\RR$.

In the case when $\tr{A}=0$, i.e.\ $\mu_A$ unimodular, it follows that
$$
F(A)=\frac{\left(\tr{S(A)^2}\right)^2}{\left(\tr{S(A)^2}\right)^2 + \unc|[A,A^t]|^2},
$$
hence $F(A)\leq 1$ and equality holds if and only if $[A,A^t]=0$.  Thus $M_A=1$ for any $\mu_A\in\sca_\RR$ and it is a maximum if and only if $A$ is semisimple.  Note that the maxima of $F$ on one of these Lie groups are precisely solvsolitons, as in the nilpotent case.

More generally, it is proved in \cite[(2)]{BhmLfn2} that for any $\mu\in\sca_{unim}$, $F(\lambda)=M_\mu$ for some $\lambda\in\Gl(\sg)\cdot\mu$ if and only if $\lambda$ is a solvsoliton.

Some general results on Ricci pinching of solvmanifolds follow.

\begin{theorem}\label{main}\cite{RP}
\quad
\begin{itemize}
  \item[(i)] $0<m_\mu$ and $F\left(\overline{\Gl(\sg)\cdot\mu}\right)=[m_\mu,M_\mu]$ for any $\mu\in\sca_\RR$.
  \item[ ]
  \item[(ii)] For $n\geq 4$, $\inf\{ m_\mu:\mu\in\sca_\RR\}=0$; in particular, $F(\sca_\RR)=(0,n]$.
  \item[ ]
  \item[(iii)] $F(\Gl(\sg)\cdot\mu)=(m_\mu,M_\mu]$ for every $\mu\in\sca_{\RR}\cap\sca_{unim}$.
  \item[ ]
  \item[(iv)] $m_\mu=0$ for any $\mu\in\sca\smallsetminus\sca_\RR$.
\end{itemize}
\end{theorem}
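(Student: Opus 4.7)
The four parts of Theorem \ref{main} share a common architecture: by scale invariance of $F$, the study of $F$ on $\Gl(\sg)\cdot\mu$ reduces to its restriction to $\overline{\Gl(\sg)\cdot\mu}$ intersected with the unit sphere of $\Lambda^2\sg^*\otimes\sg$, a compact set on which $F$ is continuous away from $\sca_{flat}$. Lemma \ref{ramiro} controls the key dichotomy: this compact set avoids $\sca_{flat}$ precisely when $\mu\in\sca_\RR$. Parts (i) and (iv) exploit the two sides of this dichotomy, while (ii) and (iii) are quantitative refinements.

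For (i), Lemma \ref{ramiro} makes $\overline{\Gl(\sg)\cdot\mu}\cap\{|\nu|=1\}$ a compact subset of $\sca\setminus\sca_{flat}$, where $F$ is continuous and strictly positive; hence $F$ attains positive extrema $0<m_\mu\leq M_\mu<\infty$. Using connectedness of the $\mathrm{GL}^+(\sg)$-orbit and continuity of $F$, the image $F(\Gl(\sg)\cdot\mu)$ is an interval, and passage to the orbit closure yields $F(\overline{\Gl(\sg)\cdot\mu})=[m_\mu,M_\mu]$.

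I expect the main obstacle to be (iv). By the contrapositive of Lemma \ref{ramiro}, for $\mu\in\sca\setminus\sca_\RR$ there is a nonzero $\nu\in\sca_{flat}\cap\overline{\Gl(\sg)\cdot\mu}$ and a one-parameter subgroup $e^{tA}\subset\Gl(\sg)$ along which $\mu_t:=e^{tA}\cdot\mu\to\nu$ on the unit sphere. The difficulty is that both $\scalar_{\mu_t}\to 0$ and $|\Ricci_{\mu_t}|\to 0$, so mere continuity does not give $F(\mu_t)\to 0$; one must compare the vanishing rates. The plan is to use the explicit Ricci and scalar formulas for solvable Lie brackets and exploit that $\mu\notin\sca_\RR$ forces a nonzero purely imaginary semisimple component in some $\ad_\mu X$, which contributes to $|\Ricci_{\mu_t}|^2$ at strictly lower order than to $\scalar_{\mu_t}^2$, forcing $F(\mu_t)\to 0$. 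Part (ii) then follows by approximation: for $n\geq 4$, Milnor's classification provides nonzero $\nu\in\sca_{flat}\setminus\sca_\RR$ (with some $\ad X$ acting skew-symmetrically), and one builds pairwise nonisomorphic $\mu_k\in\sca_\RR$ with $\mu_k\to\nu$ via almost-abelian extensions $\mu_{A_k}$ whose symmetric part tends to $0$ while the antisymmetric part stays bounded. The explicit unimodular almost-abelian formula $F(A)=(\tr S(A)^2)^2/[(\tr S(A)^2)^2+\tfrac{1}{4}|[A,A^t]|^2]$ then confirms $F(\mu_{A_k})\to 0$, whence $m_{\mu_k}\to 0$. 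The identity $F(\sca_\RR)=(0,n]$ follows from (i) orbit-by-orbit combined with $F(\mu_{hyp})=n$.

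For (iii), (i) already gives $F(\overline{\Gl(\sg)\cdot\mu})=[m_\mu,M_\mu]$, and the question is which endpoints lie in $F(\Gl(\sg)\cdot\mu)$ itself. I would invoke \cite[(2)]{BhmLfn2}: for $\mu\in\sca_{unim}$, $F$ attains its supremum on the orbit precisely at solvsolitons, so $M_\mu$ is attained in the $\sca_\RR\cap\sca_{unim}$ setting where a solvsoliton exists in the orbit; the infimum is not attained on the orbit, since any critical point of $F$ on a unimodular orbit is a solvsoliton (a local maximum, not a minimum), so $m_\mu$ can only be attained on $\overline{\Gl(\sg)\cdot\mu}\setminus\Gl(\sg)\cdot\mu$. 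Together these yield $F(\Gl(\sg)\cdot\mu)=(m_\mu,M_\mu]$.
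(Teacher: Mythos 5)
This theorem is imported from \cite{RP} and stated here without proof, so there is no in-paper argument to compare yours against; I can only measure your proposal against the surrounding material of Sections \ref{RP-sec} and \ref{RP-g2-sec}. Your overall architecture is the right one and is exactly the framework the paper sets up: scale-invariance of $F$ reduces everything to $\overline{\Gl(\sg)\cdot\mu}\cap\{|\nu|=1\}$, Lemma \ref{ramiro} makes this a compact set avoiding $\sca_{flat}$ precisely in the real-type case, and part (i) follows as you say. For (ii), your traceless almost-abelian matrices with symmetric part tending to $0$ and bounded antisymmetric part are precisely the $C_t$-type families the paper uses (Section \ref{alm-abel-g2}), and the unimodular formula for $F(A)$ closes that argument; the only loose end is the claim $F(\sca_\RR)=(0,n]$, which needs a connectedness argument across orbits (a continuous path of real-type non-flat brackets joining $\mu_{hyp}$ to the $C_t$'s), not just the orbit-by-orbit statement (i).

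The genuine gaps are in (iii) and (iv). For (iii), your derivation of the right-hand endpoint rests on the premise that every $\mu\in\sca_\RR\cap\sca_{unim}$ admits a solvsoliton in its orbit; this is false, and the paper itself exhibits the failure: for nilpotent $\mu$ (which lie in $\sca_\RR\cap\sca_{unim}$) with no nilsoliton, Section \ref{RP-sec} records $F(\Gl(\sg)\cdot\mu)=(\tfrac{1}{3},M_\mu)$, open at the top, and $F(\Gl(\sg)\cdot\mu_{heis})=\{\tfrac13\}$. So the attainment of $M_\mu$ cannot be argued the way you do, and indeed the literal statement of (iii) conflicts with the paper's own nilpotent table --- the precise hypotheses must be taken from \cite{RP} (existence of a solvsoliton, exclusion of the degenerate orbits). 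Your parenthetical claim that every critical point of $F$ on a unimodular orbit is a solvsoliton is also not what \cite[(2)]{BhmLfn2} gives you: that result characterizes the points where the \emph{global maximum} $M_\mu$ is attained, so the non-attainment of $m_\mu$ on the orbit needs a separate argument. For (iv), what you have written is a plan rather than a proof: the whole difficulty is the rate comparison showing that $|\Ricci_{\mu_t}|^2$ vanishes to strictly lower order than $\scalar_{\mu_t}^2$ along a degeneration to $\sca_{flat}$, and you establish this only in the almost-abelian examples ($A_t$, $D_t$), not for a general $\mu\in\sca\smallsetminus\sca_\RR$, where one must produce the degeneration from an $\ad_\mu X$ that is non-nilpotent with purely imaginary spectrum and control the full Ricci formula of a solvable Lie bracket. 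As it stands, (i) and (ii) are essentially complete, (iv) is an honest but unexecuted outline, and (iii) is broken.
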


\section{Ricci pinching of $G_2$-structures on solvmanifolds}\label{RP-g2-sec}

With Sections \ref{erp} and \ref{RP-sec} as our motivation, we now study the extremal points and values of the functional
$$
F:\sca_{closed}\smallsetminus\sca_{tf} \longrightarrow \RR, \qquad F(\mu):=\frac{\scalar_\mu^2}{|\Ricci_\mu|^2},
$$
where $\scalar_\mu$ and $\Ricci_\mu$ are respectively the scalar curvature and Ricci operator of $(S_\mu,\ip)$.  Since no solvable Lie group admits an Einstein (non-flat) and closed $G_2$-structure (see \cite{FrnFinMnr2}),
$$
0<F(\mu)<7, \qquad\forall\mu\in\sca_{closed}\smallsetminus\sca_{tf}.
$$
For each $\mu\in\sca_{closed}$ we define,
$$
n_\mu:=\inf F(\Gl(\sg)\cdot\mu\cap\sca_{closed}), \qquad N_\mu:=\sup F(\Gl(\sg)\cdot\mu\cap\sca_{closed}),
$$
that is, the infimum and supremum of $F$ among all closed $G_2$-structures on the Lie group $S_\mu$.  Recall that $F$ is not defined on $\sca_{tf}$, so when we write $F(\cca)$ for some subset $\cca\subset\sca_{closed}$ we always mean $F(\cca\smallsetminus\sca_{tf})$.

It follows from Theorem \ref{bryant} that if $\mu\in\sca_{closed}$ and $S_\mu$ admits a lattice (i.e.\ a cocompact discrete subgroup), then $F(\mu)\leq 3$, and equality holds if and only if $\mu$ is ERP.  On the other hand, if for a unimodular $\mu\in\sca_{closed}$ there is a solvsoliton $\lambda\in\Gl_7(\RR)\cdot\mu\cap\sca_{closed}$, then $F(\lambda)=N_\mu=M_\mu$.

For any class of $G_2$-structures defined by a closed cone $\cca\subset\sca$ such that $\cca\cap\sca_{tf}=\{ 0\}$, one has that
$$
\cca\cap\sca_{closed}\cap\{\mu:|\mu|=1\}
$$
is a compact subset of $\sca_{closed}\smallsetminus\sca_{tf}$.  This implies that the infimum and supremum of $F(\cca\cap\sca_{closed})$ are actually minimum and maximum, respectively, and
$$
0 < \min F(\cca\cap\sca_{closed}) \leq \max F(\cca\cap\sca_{closed}) < 7.
$$
Examples of classes $\cca$ for which the above holds include
\begin{itemize}
\item $\nca$ or any closed cone contained in $\nca$.
\item[ ]
\item $\overline{\Gl_7(\RR)\cdot\mu}$ for any $\mu\in\sca_\RR$ (see Lemma \ref{ramiro}).
\end{itemize}

It would be really interesting to know the number
$$
N_{closed}:=\sup F(\sca_{closed}).
$$
If $N_{closed}$ turns out to be a maximum, then the closed $G_2$-structures with $F=N_{closed}$ should be special in some sense.  At the moment, the largest known value for $F$ on $\sca_{closed}$ is $\frac{81}{17}\sim 4.76$ and was found in \cite[Example 4.11]{LS-ERP} at a shrinking Laplacian soliton.

\begin{proposition}\label{main-g2}
\quad
\begin{itemize}
  \item[(i)] $0<n_\mu$ for any $\mu\in\sca_{closed}\cap\sca_\RR$.
  \item[ ]
  \item[(ii)] $\inf\{ n_\mu:\mu\in\sca_{closed}\cap\sca_\RR\}=0$.
\end{itemize}
\end{proposition}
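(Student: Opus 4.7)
For (i), the plan is to argue by compactness using Lemma \ref{ramiro}. Since $F$ is $\RR^*$-invariant,
$$n_\mu=\inf\{F(\lambda):\lambda\in\Gl_7(\RR)\cdot\mu\cap\sca_{closed},\,|\lambda|=1\}.$$
I enlarge the relevant set to $K:=\overline{\Gl_7(\RR)\cdot\mu}\cap\sca_{closed}\cap\{|\lambda|=1\}$, which is compact. Lemma \ref{ramiro} gives $\overline{\Gl_7(\RR)\cdot\mu}\cap\sca_{flat}=\{0\}$, and combined with $\sca_{tf}=\sca_{closed}\cap\sca_{flat}$ this forces $K\cap\sca_{tf}=\emptyset$. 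Hence $F$ is continuous and strictly positive on $K$, attains a positive minimum there, and this bounds $n_\mu$ from below.

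For (ii), the plan is to exhibit $\mu_\epsilon\in\sca_{closed}\cap\sca_\RR$ with $F(\mu_\epsilon)\to 0$; since $n_{\mu_\epsilon}\leq F(\mu_\epsilon)$, this forces $\inf n_\mu=0$. I work in the almost-abelian setting of Section \ref{alm-abel-preli}: for $A\in\slg_3(\CC)\subset\glg_6(\RR)$, the real trace of $A$ is zero, so $\mu_A$ is unimodular and the formula from Section \ref{RP-sec} applies,
$$F(A)=\frac{(\tr S(A)^2)^2}{(\tr S(A)^2)^2+\tfrac{1}{4}|[A,A^t]|^2}.$$
I perturb off the torsion-free locus $\sug(3)\cap\slg_3(\CC)$ by setting $A_\epsilon:=A_0+\epsilon B$ with $A_0\in\sug(3)$ (so $A_0^t=-A_0$ as a real matrix) and $B\in\slg_3(\CC)$. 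A direct expansion yields $S(A_\epsilon)=\epsilon S(B)$ and
$$[A_\epsilon,A_\epsilon^t]=2\epsilon[A_0,S(B)]+\epsilon^2[B,B^t],$$
so $(\tr S(A_\epsilon)^2)^2=O(\epsilon^4)$ whereas $|[A_\epsilon,A_\epsilon^t]|^2=4\epsilon^2|[A_0,S(B)]|^2+O(\epsilon^3)$; consequently $F(A_\epsilon)=O(\epsilon^2)\to 0$ whenever $[A_0,S(B)]\neq 0$.

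A concrete choice that I expect to work is $A_0=\mathrm{diag}_\CC(\im,-\im,0)$ and $B=\mathrm{diag}_\CC(1,1,-2)+E_{12}$: both lie in $\slg_3(\CC)$; the real-diagonal summand of $B$ commutes with $A_0$, whereas the $E_{12}$ summand produces $[A_0,S(E_{12})]\neq 0$ via a short explicit $6\times 6$ calculation, and $A_\epsilon$ is complex upper-triangular with spectrum $\{\im+\epsilon,-\im+\epsilon,-2\epsilon\}$, so every eigenvalue has nonzero real part for $\epsilon\neq 0$, which forces $\mu_{A_\epsilon}\in\sca_\RR$. The main obstacle will be precisely this balancing act in (ii): one must simultaneously keep the family inside $\sca_\RR$ (which rules out skew-Hermitian perturbations that would trap $A_\epsilon$ in $\sca_{\im\RR}$) and arrange $[A_0,S(B)]\neq 0$; blending a real-diagonal direction with the non-normal $E_{12}$ is what resolves both demands in one stroke.
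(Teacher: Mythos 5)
Your proof is correct. Part (i) coincides with the paper's argument: compactness of $\overline{\Gl_7(\RR)\cdot\mu}\cap\sca_{closed}\cap\{\lambda:|\lambda|=1\}$ as a subset of $\sca_{closed}\smallsetminus\sca_{tf}$, which is exactly what Lemma \ref{ramiro} combined with $\sca_{tf}=\sca_{closed}\cap\sca_{flat}$ yields. For part (ii) the paper simply invokes the explicit family $C_t=\left[\begin{smallmatrix} t&-1&\\ 1&0& \\ &&-t\end{smallmatrix}\right]$ of Section \ref{alm-abel-g2}, with $F(C_t)=4t^2/(4t^2+1)\to 0$; note that $C_t=C_0+t\,\mathrm{diag}(1,0,-1)$ with $C_0\in\sug(3)$, so it is itself an instance of your perturbation scheme $A_\epsilon=A_0+\epsilon B$. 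Your concrete witness checks out: $[A_0,S(B)]=\im(E_{12}-E_{21})\neq 0$, $\tr S(B)^2>0$, and $\Spec(A_\epsilon)=\{\im+\epsilon,\,-\im+\epsilon,\,-2\epsilon\}$ has eigenvalues off $\im\RR$ for $\epsilon\neq 0$, which is precisely the condition for an almost-abelian $\mu_{A_\epsilon}$ to lie in $\sca_\RR$ (either $A$ nilpotent or $\Spec(A)\not\subset\im\RR$). What your more systematic route buys is the general statement that \emph{any} perturbation direction $B\in\slg_3(\CC)$ with $S(B)\neq 0$ and $[A_0,S(B)]\neq 0$ produces such a degenerating family, together with the quadratic rate $F(A_\epsilon)=O(\epsilon^2)$; what the paper's route buys is a one-line closed formula.
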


\begin{proof}
Given $\mu\in\sca_{closed}\cap\sca_{\RR}$, it follows from \cite[Lemma 3.4]{BhmLfn} that
$$
\overline{\Gl_7(\RR)\cdot\mu}\cap\sca_{closed}\cap\{\mu:|\mu|=1\}
$$
is a  compact subset of $\sca_{closed}\smallsetminus\sca_{tf}$, so part (i) follows.  Part (ii) was proved in Example \ref{alm-abel-g2} by using the family $C_t$.
\end{proof}

We note that part (i) also follows from Theorem \ref{main}, (i) and the fact that $m_\mu\leq n_\mu$.

\begin{corollary}
For any non-abelian solvable Lie group $S$ of real type there exists a constant $C(S)>0$ depending only on $S$ such that
$$
|\Ricci(\psi)| \leq C(S)|\scalar(\psi)|,
$$
for any left-invariant $G_2$-structure $\psi$ on $S$.
\end{corollary}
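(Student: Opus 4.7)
The plan is to reduce the statement to a purely Riemannian estimate and then invoke Theorem \ref{main}(i). The key observation is that $\Ricci(\psi)$ and $\scalar(\psi)$ depend only on the metric associated to $\psi$, so neither the $G_2$-structure itself nor whether it is closed plays any role; it is enough to bound $|\Ricci|$ in terms of $|\scalar|$ over all left-invariant metrics on $S$.

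First I would realize $S=S_\mu$ for some nonzero $\mu\in\sca_\RR$ (possible because $S$ is non-abelian and of real type). Any left-invariant $G_2$-structure $\psi$ on $S$ determines an associated left-invariant metric, which, modulo the $\Or(\sg)$-action, corresponds to some $\lambda\in\Gl(\sg)\cdot\mu$; tautologically $\Ricci(\psi)=\Ricci_\lambda$ and $\scalar(\psi)=\scalar_\lambda$.

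Next I would check that $F$ is defined on the entire orbit $\Gl(\sg)\cdot\mu$, i.e.\ that no $\lambda$ in it is flat. This is immediate from $\sca_{flat}\subset\sca_{\im\RR}$ combined with $\sca_{\im\RR}\cap\sca_\RR=\nca$, since the only nilpotent flat bracket is the abelian one, excluded by the hypothesis that $S$ is non-abelian. Theorem \ref{main}(i) then gives $m_\mu>0$, and by definition of the infimum, $F(\lambda)\geq m_\mu$ for every $\lambda\in\Gl(\sg)\cdot\mu$. Rearranging $\scalar_\lambda^2/|\Ricci_\lambda|^2\geq m_\mu$ yields
$$
|\Ricci_\lambda|\leq m_\mu^{-1/2}\,|\scalar_\lambda|,
$$
and setting $C(S):=m_\mu^{-1/2}$, which depends only on $\mu$ and hence only on $S$, completes the argument.

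There is no serious technical obstacle; the only point worth emphasizing is that one must use the metric-level bound of Theorem \ref{main}(i) rather than Proposition \ref{main-g2}(i), because the corollary concerns all left-invariant $G_2$-structures, not just closed ones. Using $n_\mu$ in place of $m_\mu$ would yield a sharper constant $C(S)=n_\mu^{-1/2}\leq m_\mu^{-1/2}$, but would only cover closed $\psi$.
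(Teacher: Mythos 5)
Your proof is correct and follows the route the paper intends: since $\Ricci$ and $\scalar$ depend only on the underlying metric, the corollary is exactly the statement $m_\mu>0$ of Theorem \ref{main}(i) rearranged, with $C(S)=m_\mu^{-1/2}$ (and Lemma \ref{ramiro} gives directly that no nonzero bracket in $\overline{\Gl(\sg)\cdot\mu}$ is flat, in place of your nilpotency argument). You are also right to insist on $m_\mu$ rather than $n_\mu$: the corollary covers non-closed $G_2$-structures, so Proposition \ref{main-g2}(i) alone would not suffice.
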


This estimate may have some applications in the study of convergence of geometric flows for $G_2$-structures (see \cite{BhmLfn}).

\subsection{Nilpotent case}\label{nilp-g2}
Since $\mu_{heis}$ does not appear in the list $\mu_1,\dots,\mu_{12}$ given in Section \ref{nilp-preli}, we obtain that $\frac{1}{3}<\min F(\sca_{closed}\cap\nca)$ and so $\frac{1}{3}<n_\mu$ for any $\mu\in\sca_{closed}\cap\nca$.  In what follows, we describe what we know about the behavior of $F$ on each of the nilpotent Lie groups admitting a closed $G_2$-structure (see \cite{Ncl}):

\begin{itemize}
  \item $\mu_1$: $F$ is not defined.

  \item $\mu_2$: $F\equiv\unm$; in particular, $n_{\mu_2}=N_{\mu_2}=\unm$.

  \item $\mu_3$:  $F\equiv \unm$ on the curve of closed Laplacian solitons.

  \item $\mu_4$: $F=\frac{4}{5}= N_{\mu_4}$ at the nilsoliton and $F=\frac{3}{4}$ at the Laplacian soliton.

  \item $\mu_5$:  $F=\frac{3}{4}$ at the Laplacian soliton, but $F>\frac{3}{4}$ on a certain curve of closed $G_2$-structures.

  \item $\mu_6$: At the nilsoliton, $F=\frac{4}{5}=N_{\mu_6}$, and at the Laplacian soliton, $F=\frac{3}{4}$.

  \item $\mu_7$: At the Laplacian soliton, $F=\frac{3}{4}$, though $F>\frac{3}{4}$ on a curve of closed $G_2$-structures.

  \item $\mu_{12}$: $F=1= N_{\mu_{12}}$ at the nilsoliton.
\end{itemize}

We note that Laplacian solitons in general fail to provide the maximum value of $F$ on a given nilpotent Lie group.

\subsection{Almost-abelian case}\label{alm-abel-g2}
We work in this section on the class of almost-abelian solvable Lie groups (see Section \ref{alm-abel-preli}).  For each $A\in\slg_3(\CC)$ one has that
$$
F(A)=\frac{|H(A)|^4}{|H(A)|^4 + \frac{1}{8}|[A,A^*]|^2},
$$
where $H(A):=\unm(A+A^*)$ is the hermitian part of $A$ and $|B|^2:=\tr{BB^*}$ for any $B\in\slg_3(\CC)$.  It follows that $F(A)\leq 1$ and equality holds if and only if $[A,A^*]=0$.  Thus the maximum of $F$ on a given non-nilpotent $S_{\mu_A}$ is only attained if $A$ is semisimple and it is both a Laplacian and a Ricci soliton.  The following example explicitly shows that the maximum value of $F$ is not always attained at a Laplacian soliton in the nilpotent case.

\begin{example}\label{mu6-F}
Consider the set of closed $G_2$-structures on the $3$-step nilpotent Lie group $S_{\mu_6}$ parameterized by
$$
\left[\begin{matrix} 0&a&0\\ &0&1 \\ &&0 \end{matrix}\right], \qquad a>0.
$$
The Ricci soliton and the Laplacian soliton correspond to $a=1$ and $a=\sqrt{2}$, respectively.  We have that
$$
F(a)=\frac{a^4+2a^2+1}{2a^4+a^2+2}, \qquad 0<a,
$$
a function with only one critical point, a global maximum with $F=\frac{4}{5}$ at the nilsoliton $a=1$.  Note that at the Laplacian soliton, $F(\sqrt{2})=\frac{3}{4}$.
\end{example}

Concerning the behavior of $F$ close to $\sca_{tf}$, we have that
$$
A_t:=\left[\begin{matrix} t&-1&\\ 1&-t& \\ &&0\end{matrix}\right], \quad F(A_t)=\frac{t^4}{t^4+t^2} \underset{t\to 0}\longrightarrow 0; \qquad
B_t:=\left[\begin{matrix} t&-1&\\ 1&t&\\ &&-2t\end{matrix}\right], \quad F(B_t)\equiv 1.
$$
This implies that $F$ diverges at the torsion-free $G_2$-structure $A_0=B_0$.  Since $\Spec(A_t)=\{\pm\im\sqrt{1-t^2}\}$ for any $t<1$, we deduce that $n_{A_0}=0$.   On the other hand, $\Spec(B_t)=\{\pm\im+t,-2t\}$, so the family of Laplacian solitons $\mu_{B_t}$ is pairwise non-isomorphic.

More generally, $n_A=0$ for every $\mu_A\in\sca_{\im\RR}\smallsetminus\nca$.  Indeed, if $a\ne b$, then
$$
D_t:=\left[\begin{matrix} a\im&t&\\ &b\im& \\ &&c\im \end{matrix}\right], \qquad F(D_t)=\frac{t^4}{t^4 + (a-b)^2t^2} \underset{t\to 0} \longrightarrow 0.
$$
Recall that the class of almost-abelian Lie brackets is contained in the disjoint union of $\sca_\RR\smallsetminus\nca$, $\nca$ and $\sca_{\im\RR}\smallsetminus\nca$.  In the list of matrices given in Section \ref{alm-abel-preli}, the first two belong to $\sca_\RR\smallsetminus\nca$, the second two to $\nca$ and the last two to $\sca_{\im\RR}\smallsetminus\nca$.

The following family $C_t$, $0<t$, in $\sca_\RR$ given by
$$
C_t:=\left[\begin{matrix} t&-1&\\ 1&0& \\ &&-t \end{matrix}\right], \qquad F(C_t)=\frac{4t^4}{4t^4 + t^2} \underset{t\to 0} \longrightarrow 0,
$$
shows that $\inf\{ n_A:\mu_A\in\sca_{closed}\cap\sca_\RR\}=0$.

In \cite[Example 5.20]{LF}, the Laplacian flow on the family
$$
\left[\begin{matrix} 0&a&\\ b&0& \\ &&0 \end{matrix}\right], \qquad F(a,b)=\frac{(a+b)^4}{(a+b)^4 + (a^2-b^2)^2},
$$
was studied.  Using the ODE obtained there for $a(t),b(t)$, it is easy to prove that $F$ is strictly decreasing along the Laplacian flow solutions starting at closed $G_2$-structures with $ab<0$, $a\ne-b$.  This shows that the Laplacian flow does not always improve the Ricci pinching of closed $G_2$-structures.  On the other hand, the functional $F$ was found to be increasing in some other Laplacian flow solutions like in the above example with $ab>0$ and in the evolution studied in \cite[Example 4.9]{LS-ERP}.

\subsection{Open questions}
It would be interesting to know the answers to the following natural questions:

\begin{itemize}
\item Given $A_0\in\slg_3(\CC)$ such that $\Spec(A_0)\subset\im\RR$, i.e.\ $\mu_{A_0}\in\sca_{\im\RR}$, does $\lim F$ as $A$ goes to $A_0$ exist on the isomorphism class $\RR^*\Sl_3(\CC)\cdot A_0$?  Examples $A_t$ and $B_t$ above show that such a limit does not exist on the set of all closed almost-abelian Lie brackets.
\item[ ]
\item Is $n_\mu=0$ for any $\mu\notin\sca_{closed}\cap\sca_\RR$?  This holds in the Riemannian case (see Theorem \ref{main}, (iv)) and in the $G_2$ case for almost-abelian Lie groups (see Section \ref{alm-abel-g2}).
\item[ ]
\item Is $F(\Gl_7(\RR)\cdot\mu\cap\sca_{closed})=(n_\mu,N_\mu]$ for any $\mu\in\sca_\RR$?
\item[ ]
\item Does $F(\lambda)=N_\mu$ hold for any solvsoliton $\lambda\in\Gl_7(\RR)\cdot\mu\cap\sca_{closed}$?  This is known to be true in the unimodular case and it is open in the non-unimodular Riemannian case.
\item[ ]
\item Is $F(\Gl(\sg)\cdot\mu)=(0,M_\mu)$  for any $\mu\in\sca_{closed}\smallsetminus\sca_\RR$?  What about for $\mu\in\sca_{\im\RR}\cap\sca_{closed}$?
\item[ ]
\item What is the value of $\sup\{ M_\mu:\mu\in\sca_{\im\RR}\cap\sca_{closed}\}$?
\end{itemize}

{\small
\begin{figure}
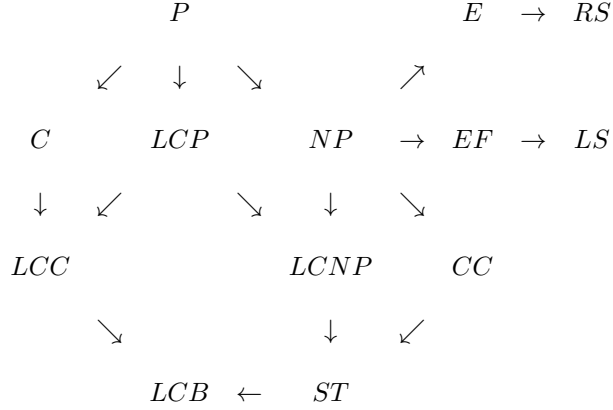

$$
\begin{array}{ccccccccc}
&&P&&&&E&\rightarrow&RS \\ \\
&\swarrow & \downarrow & \searrow &&\nearrow& && \\ \\
C&&LCP&&NP&\rightarrow&EF&\rightarrow& LS \\ \\
\downarrow & \swarrow&&\searrow & \downarrow &\searrow&&& \\ \\
LCC &&&& LCNP && CC && \\ \\
&\searrow&&& \downarrow & \swarrow &&&\\ \\
&&LCB&\leftarrow& ST &&&&
\end{array}
$$
\caption{Special classes of $G_2$-structures}\label{g2-fig}
\end{figure}}

\section{Appendix: Special classes of $G_2$-structures}\label{dist-G2}

The {\it torsion forms} of a $G_2$-structure $\vp$ on $M$ are the components of the {\it intrinsic torsion} $\nabla\vp$, where $\nabla$ is the Levi-Civita connection of the metric $g$ attached to $\vp$.  They can be defined as the unique differential forms $\tau_i\in\Omega^iM$, $i=0,1,2,3$, such that
\begin{equation}\label{dphi}
d\vp=\tau_0\ast\vp+3\tau_1\wedge\vp+\ast\tau_3, \qquad d\ast\vp=4\tau_1\wedge\ast\vp+\tau_2\wedge\vp.
\end{equation}

Some special classes of $G_2$-structures are defined or characterized as follows, we refer to \cite{FrnGry} for further information:

\begin{itemize}
\item {\it parallel} (P) or {\it torsion-free}: $d\vp=0$ and $d\ast\vp=0$, or equivalently, $\nabla\vp=0$ (for $M$ compact, this is equivalent to $\vp$ {\it harmonic} (H), i.e.\  $\Delta\vp=0$);
\item[ ]
\item {\it closed} (C) or {\it calibrated}: $d\vp=0$;
\item[ ]
\item {\it coclosed} (CC) or {\it cocalibrated}: $d\ast\vp=0$;
\item[ ]
\item {\it locally conformal parallel} (LCP): $d\vp=3\tau_1\wedge\vp$ and $d\ast\vp=4\tau_1\wedge\ast\vp$;
\item[ ]
\item {\it locally conformal closed} (LCC): $d\vp=3\tau_1\wedge\vp$ (in particular, $d\tau_1=0$);
\item[ ]
\item {\it nearly parallel} (NP): $d\vp=\tau_0\ast\vp$ (which implies that $\Delta\vp=\tau_0^2\vp$ and $\Ricci=\frac{3}{8}\tau_0^2g$);
\item[ ]
\item {\it locally conformal nearly parallel} (LCNP): $d\vp=\tau_0\ast\vp+3\tau_1\wedge\vp$ and $d\ast\vp=4\tau_1\wedge\ast\vp$;
\item[ ]
\item {\it skew-torsion} (ST) or $G_2T$-{\it structures}: $d\ast\vp=4\tau_1\wedge\ast\vp$ (in particular, $d\tau_1=0$);
\item[ ]
\item {\it locally conformal balanced} (LCB): $d\tau_1=0$;
\item[ ]
\item {\it eigenform} (EF): $\Delta\vp=c\vp$ for some $c\in\RR$;
\item[ ]
\item {\it Einstein} (E): $\ricci=cg$ for some $c\in\RR$;
\item[ ]
\item {\it Laplacian soliton} (LS): $\Delta\vp = c\vp + \lca_X\vp$ for some $c\in\RR$ and $X\in\mathfrak{X}(M)$ (called {\it expanding}, {\it steady} or {\it shrinking} if $c>0$, $c=0$ or $c<0$, respectively);
\item[ ]
\item {\it Ricci soliton} (RS): $\ricci = cg + \lca_Xg$ for some $c\in\RR$ and $X\in\mathfrak{X}(M)$.
\end{itemize}

We also refer to \cite{Rff} for a more detailed study of most of these classes of $G_2$-structures and their possible intersections.  Figure \ref{g2-fig} describes the obvious inclusions among them.  For a given class $\cca$, a $G_2$-structure $\vp$ is said to be {\it locally conformal} $\cca$ if for each $p\in M$, there exist an open neighborhood $U$ and a conformal change $\psi:=e^f\vp$, $f\in C^{\infty}(U)$ such that $(U,\psi)$ is a $G_2$-structure of class $\cca$.

\end{document}